%%%%%%%%%%  LaTeX2e + AMS-LaTex(Ver 1.2)  %%%%%%%%%%%%%%

%%%%%%%%%%%%%%%%%%     Title etc.     %%%%%%%%%%%%%%%%%%%%
\documentclass[12pt]{amsart}
\usepackage{amsthm}
\usepackage{amsmath}
\usepackage{amssymb}

\usepackage{graphicx}

\theoremstyle{plain}
\newtheorem{thm}{Theorem}[section]
\newtheorem{prop}[thm]{Proposition}

\newtheorem{lem}[thm]{Lemma}
\newtheorem{rem}[thm]{Remark}
\newtheorem{ex}[thm]{Example}
\newtheorem{defn}[thm]{Definition}

\oddsidemargin=38pt
\evensidemargin=38pt
\textwidth = 375pt
\textheight = 600pt
\pagestyle{plain}
\numberwithin{equation}{section}

\subjclass[2010]{Primary 60H30; secondary 91G30, 91G20}

%%%%%%%%%%%%%%%%%%%%%%%%%%%%%%%%%%%%%%%%%%%%%%%%%%%%%%%%%%%%%%%%%%%%%%%

\begin{document}

\title[\null]
{A Vasicek-type short rate model with memory effect}

\author[\null]{Akihiko Inoue}
\address{Department of Mathematics, Hiroshima University,
Higashi-Hiroshima Japan}

\author[\null]{Shingo Moriuchi}
\address{Bizen-Ryokuyou High School, 
Bizenshi, Japan}

\author[\null]{Yusuke Nakamura}
\address{Department of Mathematics, Hiroshima University,
Higashi-Hiroshima, Japan}

\thanks{Address correspondence to A. Inoue, Department of Mathematics, Hiroshima University,
Higashi-Hiroshima 739-8526, Japan; E-mail: inoue100@hiroshima-u.ac.jp}

\date{August 1, 2015}

\keywords{Short rate, Vasicek-tppe model, Yield curve, Affine term structure, 
Term structure equation}

\begin{abstract}
We introduce a Vasicek-type short rate model which has two additional parameters 
representing memory effect. This model 
presents better results in yield curve fitting than the classical Vasicek model. 
We derive closed-form expressions for the 
prices of bonds and bond options. Though the model is non-Markov, 
there exists an associated Markov process which allows one to apply usual numerical methods to the model. 
We derive analogs of 
an affine term structure and term structure equations for the model, 
and, using them, we present a numerical method to evaluate contingent claims.
\end{abstract}

\maketitle

%\doublespacing

%%%%%%%%%%%%%%   Section 1 %%%%%%%%%%%%%%%%%%%%%%%%%%

\section{introduction}\label{sec:1}

The Vasicek model introduced by \cite{V} is a classical short rate model. It 
is defined by a stochastic differential equation (SDE) of the form
\begin{equation}
dr(t)=\{a-br(t)\}d t+ \sigma dW^*(t)\qquad (t\ge 0)
\label{eq:VasicekSDE326}
\end{equation}
describing the short rate process $\{r(t)\}$, where $a$, $b$ and $\sigma$ are positive 
constants and 
$\{W^*(t)\}$ is a Brownian motion under the equivalent martingale measure. 
In this paper, we introduce a Vasicek-type short rate model with memory effect 
which has some good properties.

To define the Vasicek-type model, denoted by $\mathcal{M}$, 
we replace the Brownian motion $\{W^*(t)\}$ in (\ref{eq:VasicekSDE326}) 
by a stochastic process $\{Z(t)\}$ which was introduced and investigated by \cite{AI, AIK, INA} 
(see Definition \ref{defn:Z549} below). 
The process $\{Z(t)\}$ 
is a Gaussian, stationary increment process which has parameters $p$, $q$ representing memory 
effect. 
The process 
$\{Z(t)\}$ is also an It\^o process unlike, e.g., fractional Brownian motion. 
This implies that we can apply the standard stochastic 
calculus, that is, the It\^o calculus, to the model $\mathcal{M}$.

For the Vasicek-type model $\mathcal{M}$ and $0\le t\le T$, we derive a closed-form expression for the 
time $t$ price $P(t,T)$ of a zero-coupon bond with maturity $T$, 
which we also call a {\em $T$-bond} (see Theorem \ref{thm:bondprice382} below, the proof of which 
is given in Section \ref{sec:4}). 
From the result and the forward measure method (cf.\ \cite{J} and \cite[Chapter 7]{F}), 
we also obtain a closed-form expression for the prices of 
European bond options (see Proposition \ref{prop:option274} below).

The closed-form expression for $P(t,T)$ also gives that for the yield $Y(t,T)$ defined by
\begin{equation}
Y(t,T)=-\frac{\log P(t,T)}{T-t}\qquad (t<T)
\label{eq:yield888}
\end{equation}
or $P(t,T)=\exp[-(T-t)Y(t,T)]$ (see Theorem \ref{thm:yield269} below). 
As in other short rate models, we can estimate the parameters of the Vasicek-type model $\mathcal{M}$ 
by fitting the yield curve $T\mapsto Y(0,T)$ to actual yield data. 
We will see that the model $\mathcal{M}$ shows better fitting results than the classical Vasicek model, 
thanks to the additional parameters $p$ and $q$ (see Section \ref{sec:6}).

The model $\mathcal{M}$ has memory effect. In other words, it is a non-Markov model. 
In general, non-Markov interest rate models have the drawback that numerical computations become difficult 
in them. 
It turns out that the model $\mathcal{M}$ is free from this drawback, thanks to 
the existence of an associated two-dimensional 
Markov process, which is obtained by coupling the short rate process $\{r(t)\}$ 
with another process (see (\ref{eq:SDE275}) below). 
This associated Markov process allows one to apply usual numerical methods. 
In fact, using the Markov process, we derive analogs of 
an affine term structure and term structure equations (cf.\ \cite{DK1, DK2} and \cite[Chapter 5]{F}) 
for the model $\mathcal{M}$, 
and present a numerical method to evaluate European contingent claims based on them 
(see Section \ref{sec:5}).

%%%%%%%%%%%%%%%%%%% Section 2 %%%%%%%%%%%%%%%%%%%%%%%%%%%%

\section{The model}\label{sec:2}

Let $\{\mathcal{F}_t\}_{t\ge 0}$ be the $Q$-augmentation of the filtration generated by 
a one-dimensional standard Brownian motion $\{W^*(t)\}_{t\ge 0}$ defined on a complete 
probability space $(\Omega,\mathcal{F}, Q)$.

\begin{defn}\label{defn:Z549}
{\rm 
For real numbers $p$, $q$ such that
\begin{equation}
0<q<\infty, \quad -q<p<\infty,
\label{eq:pq527}
\end{equation}
we define a process $\{Z(t)\}_{t\ge 0}$ by
\begin{equation}
Z(t)=W^*(t)-\int_{0}^{t}\left\{\int_{0}^{s}pe^{-(p+q)(s-\tau)}l(\tau)dW^*(\tau)\right\}ds
\qquad (t\geq 0),
\label{eq:def-of-Z524}
\end{equation}
where the positive deterministic function $l$ is defined by
\[
l(\tau)=1-\frac{2qp}{(p+2q)^2e^{2q\tau}-p^2}\qquad (\tau\ge 0).
\]
}
\end{defn}

As is eaily seen, the process $\{Z(t)\}$ is a continuous, Gaussian, $\{\mathcal{F}_t\}$-adapted, 
It\^o process. 
It may not seem so, but $\{Z(t)\}$ is also a stationary increment process. 
In fact, if $\{\hat{W}(t)\}_{t\in\mathbb{R}}$ is another Brownian motion on 
another probability space $(\hat{\Omega}, \hat{\mathcal{F}}, \hat{Q})$, then, for the 
Gaussian, stationary increment process $\{X(t)\}_{t\ge 0}$ defined by
\begin{equation}
X(t)=\hat{W}(t) 
- \int_{0}^{t}\left\{\int_{-\infty}^s pe^{-(p+q)(s-\tau)}d\hat{W}(\tau)\right\}ds
\qquad (t\ge 0),
\label{eq:Z726}
\end{equation}
the law of $\{Z(t)\}_{t\ge 0}$ under $Q$ is equal to that of $\{X(t)\}_{t\ge 0}$ under 
$\hat{Q}$. 
See \cite[Theorem 5.2 and Example 5.3]{AIK} and \cite[Section 2]{INA}); 
(\ref{eq:Z726}) corresponds to (28) in \cite{AI} or (1.1) in \cite{INA}. 
If $p=0$, then $\{Z(t)\}$ reduces to the Brownian motion $\{W^*(t)\}$.

For $a, b, \sigma \in(0, \infty)$, we consider the following Vasicek-type SDE:
\begin{equation}
dr(t)=\{a-br(t)\}d t+ \sigma dZ(t),\quad t\ge 0, \qquad r(0)\in[0,\infty).
\label{eq:Vasicek-type-SDE541}
\end{equation}
From
\[
d\{e^{bs}r(s)\}
=e^{bs}dr(s)+be^{bs}r(s)ds \\
=ae^{bs}ds+\sigma e^{bs}dZ(s),
\]
we see that the unique strong solution to 
$(\ref{eq:Vasicek-type-SDE541})$\/ satisfies
\begin{equation}
\begin{aligned}
r(\tau)&=e^{-b(\tau-t)}r(t)+\frac{a}{b}(1-e^{-b(\tau-t)}) + \sigma\int_{t}^{\tau}e^{-b(\tau-s)}dZ(s)\\
&\qquad\qquad\qquad\qquad\qquad\qquad\qquad\qquad\qquad 
(0\le t\le \tau).
\end{aligned}
\label{eq: rep-of-r762}
\end{equation}

We consider a Vasicek-type short rate model $\mathcal{M}$ in which the short rate process 
$\{r(t)\}_{t\ge 0}$ follows (\ref{eq:Vasicek-type-SDE541}) 
or (\ref{eq: rep-of-r762}). 
We define the money-market account process $\{B(t)\}_{t\ge 0}$ by
\[
B(t):=e^{\int_{0}^{t}r(s)ds} \qquad (t\ge 0).
\]
Let $T\in (0,\infty)$. 
For $0\le t\le T$, let $P(t,T)$ be the time $t$ price of a $T$-bond. 
We regard $Q$ as an equivalent martingale measure of $\mathcal{M}$ in the sense that 
the discounted price process
\begin{equation}
\tilde{P}(t,T):=\frac{P(t,T)}{B(t)} \qquad (0\le t\le T)
\label{eq:tildeP514}
\end{equation}
becomes a $\{Q, \mathcal{F}_t\}$-martingale. 
Then we have
\begin{equation}
P(t,T)=E^Q\left.\left[e^{-\int_{t}^{T}r(s)ds}\ \right|\mathcal{F}_t\right] \qquad (0\le t\le T).
\label{eq:rep-P354}
\end{equation}
For generalities of short rate models, one can consult, e.g.,  \cite[Chapter 5]{F}.

\begin{rem}
{\rm 
If $p=0$, then $Z(t)=W^*(t)$, whence the Vasicek-type model $\mathcal{M}$ above 
reduces to the classical Vasicek model.
}
\end{rem}

%%%%%%%%%%%%%%%%%%%%%% Section 3 %%%%%%%%%%%%%%%%%%%%%%%%%%%

\section{Prices of bonds and bond options}\label{sec:3}

Though the short rate process $\{r(t)\}$ defined by (\ref{eq:Vasicek-type-SDE541}) 
is not a Markov process, the model $\mathcal{M}$ admits a closed-form  
expression for $P(t,T)$. To state this result, 
we introduce the following function:
\[
m(t):=
\int_{0}^{t}pe^{-(p+q)s}\left\{1 - e^{-b(t-s)}\right\}ds. 
\]
We can write $m(t)$ more explicitly as
\[
m(t)=
\left\{
\begin{aligned}
&\frac{p}{p+q} + \frac{bpe^{-(p+q)t}}{(p+q-b)(p+q)} - \frac{pe^{-bt}}{p+q-b} 
\qquad (p+q-b\neq 0), \\ 
&\frac{p}{p+q} - \frac{ pe^{-(p+q)t} }{p+q} - pte^{-bt}
\qquad (p+q-b=0).
\end{aligned}
\right.
\]

Here is a closed-form expression for $P(t,T)$.

\begin{thm}\label{thm:bondprice382}
For $P(t,T)$ in $(\ref{eq:rep-P354})$ with $(\ref{eq:Vasicek-type-SDE541})$ 
and $(\ref{eq:def-of-Z524})$, we have
\[
P(t,T)=\exp\left\{-A(t,T)-C(t,T)r(t) + U(t,T)\right\} 
\qquad (0\leq t\leq T),
\]
where
\[
\begin{aligned}
&C(t,T)
:=\frac{1-e^{-b(T-t)}}{b},\\
&A(t,T):=\frac{a}{b}\left\{T-t-C(t,T)\right\} 
- \frac{\sigma^2}{2b^2}\int_{0}^{T-t} \left\{m(s) + e^{-bs} - 1\right\}^2ds\\
&\qquad\qquad\qquad\qquad\qquad\qquad\qquad\qquad 
-\frac{\sigma^2 q m^2(T-t)}{b^2 \{ (p+2q)^2 e^{2qt} -p^2 \} },\\
&U(t,T)
:=\frac{\sigma (p+2q)^2 e^{qt}m(T-t)}{b\{(p+2q)^2 e^{2qt} -p^2\}}
\int_{0}^{t} \left( e^{qs}-\frac{p}{p+2q} e^{-qs} \right)dZ(s).
\end{aligned}
\]
\end{thm}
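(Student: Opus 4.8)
The plan is to compute the conditional expectation in $(\ref{eq:rep-P354})$ directly, exploiting the fact that, although $\{r(t)\}$ is not Markov, the integrated short rate $\int_t^T r(s)\,ds$ is, conditionally on $\mathcal{F}_t$, a Gaussian random variable whose conditional mean and variance can be written down explicitly. First I would substitute the solution formula $(\ref{eq: rep-of-r762})$ into $\int_t^T r(s)\,ds$ and apply the stochastic Fubini theorem to the double integral $\int_t^T\!\!\int_t^s e^{-b(s-u)}\,dZ(u)\,ds$, rewriting it as $\int_t^T C(u,T)\,dZ(u)$ with $C$ as in the statement (after a change of variable, $\int_u^T e^{-b(s-u)}ds = (1-e^{-b(T-u)})/b$). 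This yields
\[
\int_t^T r(s)\,ds = C(t,T)r(t) + \frac{a}{b}\bigl\{T-t-C(t,T)\bigr\} + \sigma\int_t^T C(u,T)\,dZ(u),
\]
so that the deterministic part already accounts for the $A$-term's first summand and for the $C(t,T)r(t)$ term.

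The core of the proof is then to handle the stochastic integral $\int_t^T C(u,T)\,dZ(u)$, which by the definition $(\ref{eq:def-of-Z524})$ of $Z$ must be rewritten in terms of $dW^*$. Using $dZ(s) = dW^*(s) - \bigl\{\int_0^s p e^{-(p+q)(s-\tau)} l(\tau)\,dW^*(\tau)\bigr\}ds$ and another application of stochastic Fubini, I would express $\int_t^T C(u,T)\,dZ(u) = \int_0^T g(u)\,dW^*(u)$ for a suitable deterministic kernel $g$ — more precisely, splitting $\int_0^T = \int_0^t + \int_t^T$, the $\int_0^t$ part is $\mathcal{F}_t$-measurable and should, after simplification using the explicit form of $l$, collapse to the $U(t,T)$ term, while the $\int_t^T$ part is independent of $\mathcal{F}_t$ and Gaussian with mean zero. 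Given this representation, $E^Q[e^{-\int_t^T r(s)ds}\mid\mathcal{F}_t]$ is evaluated by the standard Gaussian Laplace-transform identity $E[e^{X}] = e^{\mathrm{mean} + \frac12\mathrm{var}}$ applied conditionally: the conditional mean contributes $-C(t,T)r(t) - \frac{a}{b}\{T-t-C(t,T)\} + U(t,T)$ and the conditional variance $\tfrac12\mathrm{Var}(\sigma\int_t^T C(u,T)dZ(u)\mid\mathcal{F}_t) = \tfrac12\sigma^2\int_t^T g(u)^2\,du$ must be shown to equal the two variance terms in $A(t,T)$, namely $\frac{\sigma^2}{2b^2}\int_0^{T-t}\{m(s)+e^{-bs}-1\}^2 ds + \frac{\sigma^2 q m^2(T-t)}{b^2\{(p+2q)^2e^{2qt}-p^2\}}$.

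The main obstacle will be this last bookkeeping: carrying out the stochastic Fubini interchange correctly and then verifying that the various deterministic integrals assemble exactly into $A$, $C$, $U$ as stated. Concretely, after writing everything in terms of $dW^*$, one gets a kernel on $[0,t]$ of the form $\sigma\bigl[\,-\int_t^T C(u,T)\,p e^{-(p+q)(u-\tau)}l(\tau)\,du\,\bigr]$ times $dW^*(\tau)$; recognizing $\int_u^T e^{-b(s-u)}\{\cdots\}$-type expressions, together with the identity defining $l$, one must show the $\tau$-kernel equals $\frac{\sigma(p+2q)^2 e^{qt}m(T-t)}{b\{(p+2q)^2e^{2qt}-p^2\}}\bigl(e^{q\tau}-\frac{p}{p+2q}e^{-q\tau}\bigr) \cdot (\text{Radon--Nikodym-type factor})$; the appearance of $m(T-t)$ here comes from $\int_t^T C(u,T) p e^{-(p+q)(u-\tau)}\,du$ factoring as $e^{(p+q)\tau}$ times something proportional to $e^{-(p+q)t}m(T-t)$ after the substitution $u = t + s$. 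I would also double-check the degenerate case $p+q-b=0$ separately, or better, verify the formula for $m(t)$ by differentiation so that both branches are covered uniformly. A useful sanity check throughout is the limit $p\to 0$, where $l\equiv 1$, $Z=W^*$, $U\equiv 0$, $m\equiv 0$, and the formula must reduce to the classical Vasicek bond price $\exp\{-A_{\mathrm{cl}}(t,T)-C(t,T)r(t)\}$ with $A_{\mathrm{cl}}(t,T) = \frac{a}{b}\{T-t-C(t,T)\} - \frac{\sigma^2}{2b^2}\int_0^{T-t}(e^{-bs}-1)^2 ds$.
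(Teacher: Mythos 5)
Your strategy is sound, and it is genuinely different from the paper's. The paper keeps everything as integrals against $dZ$ and invokes the prediction theory of \cite{AIK} (Theorems 4.7 and 4.12 there, made applicable via the sigma-field identity (\ref{eq: innovation418})): the conditional mean is computed as the projection $P_{[0,t]}\int_t^\infty f\,dZ$, which yields $-U(t,T)/\sigma$ directly, and the conditional variance as an infinite series $\sum_{n\ge0}\int d_n^2$. You instead unwind $dZ$ into $dW^*$ via (\ref{eq:def-of-Z524}) and stochastic Fubini, obtaining $\int_t^T C(u,T)\,dZ(u)=\int_0^T g(\tau)\,dW^*(\tau)$ with the explicit deterministic kernel $g(\tau)=C(\tau,T)I_{[t,T]}(\tau)-l(\tau)\int_{\max(\tau,t)}^T C(u,T)\,pe^{-(p+q)(u-\tau)}\,du$, and then independence of Brownian increments replaces the projection machinery; this is more elementary and self-contained (it never needs (\ref{eq: innovation418})). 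The two pieces of bookkeeping you flag do close, and it is worth recording how: since $\int_{s}^T C(u,T)pe^{-(p+q)(u-\tau)}du=\tfrac1b e^{(p+q)(\tau-s)}m(T-s)$, one gets (i) $-\sigma\int_0^t g\,dW^*=\tfrac{\sigma}{b}e^{-(p+q)t}m(T-t)\int_0^t e^{(p+q)\tau}l(\tau)\,dW^*(\tau)$, and converting this back to the $dZ$-integral appearing in the stated $U(t,T)$ is precisely the content of the paper's Lemma \ref{lem:Zint384} read in reverse; and (ii) the variance identity $\int_t^T\bigl\{1-e^{-b(T-\tau)}-l(\tau)m(T-\tau)\bigr\}^2d\tau=\int_0^{T-t}\bigl\{1-e^{-bs}-m(s)\bigr\}^2ds+\tfrac{2q\,m^2(T-t)}{(p+2q)^2e^{2qt}-p^2}$, which matches your $\tfrac12\sigma^2\int_t^T g^2$ to the last two terms of $A(t,T)$, follows by differentiating both sides in $t$ and using $m'(s)=p(1-e^{-bs})-(p+q)m(s)$. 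Your $p\to0$ sanity check and the remark that the case $p+q=b$ is covered uniformly by the integral definition of $m$ are both correct; as a further consistency check, your kernel on $[t,T]$ equals $-v(\tau,T)/\sigma$ with $v$ as in Proposition \ref{prop:PtildeSDE777}.
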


We prove Theorem \ref{thm:bondprice382} in Section \ref{sec:4}.

Recall $Y(t,T)$ from (\ref{eq:yield888}). 
From Theorem \ref{thm:bondprice382}, we immediately obtain the next theorem.

\begin{thm}\label{thm:yield269}
For $Y(t,T)$ in $(\ref{eq:yield888})$ with $P(t,T)$ as in Theorem \ref{thm:bondprice382}, 
we have
\[
Y(t,T)=\frac{A(t,T)}{T-t} + \frac{C(t,T)}{T-t}r(t) - \frac{U(t,T)}{T-t}\qquad (0\le t<T).
\]
In particular,
\begin{equation}
Y(0,T)=\frac{A(0,T)}{T} + \frac{C(0,T)}{T}r(0)\qquad (T>0).
\label{eq:yield589}
\end{equation}
\end{thm}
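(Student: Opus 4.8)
The plan is to deduce Theorem~\ref{thm:yield269} directly from the closed-form bond price formula in Theorem~\ref{thm:bondprice382}, treating the yield as a purely algebraic consequence of the definition~(\ref{eq:yield888}). First I would take logarithms in
\[
P(t,T)=\exp\left\{-A(t,T)-C(t,T)r(t)+U(t,T)\right\},
\]
which is legitimate since the right-hand side is a strictly positive quantity (an exponential), obtaining
\[
\log P(t,T)=-A(t,T)-C(t,T)r(t)+U(t,T)\qquad(0\le t<T).
\]
Then, dividing by $-(T-t)$, which is nonzero precisely because we restrict to $t<T$, the definition~(\ref{eq:yield888}) gives
\[
Y(t,T)=-\frac{\log P(t,T)}{T-t}=\frac{A(t,T)+C(t,T)r(t)-U(t,T)}{T-t},
\]
and separating the three terms yields the claimed expression.

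For the special case $t=0$ I would substitute $t=0$ into the general formula. The key observation is that $U(0,T)=0$: the factor multiplying the stochastic integral in the definition of $U(t,T)$ is finite and the remaining factor is $\int_0^0(\cdots)dZ(s)=0$, so the whole term vanishes at $t=0$. This collapses the three-term expression to
\[
Y(0,T)=\frac{A(0,T)}{T}+\frac{C(0,T)}{T}r(0)\qquad(T>0),
\]
which is~(\ref{eq:yield589}); here $r(0)$ is the deterministic initial value appearing in~(\ref{eq:Vasicek-type-SDE541}).

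There is essentially no obstacle here: the statement is an immediate corollary, and the only points worth a sentence are the positivity of $P(t,T)$ needed to take the logarithm (guaranteed by the exponential form) and the vanishing of $U(0,T)$, which makes the $t=0$ formula deterministic. If one wanted to be thorough one could also remark that $A(t,T)$ and $C(t,T)$ are deterministic while $U(t,T)$ is $\mathcal{F}_t$-measurable, so that $Y(t,T)$ inherits the same measurability as $r(t)$; but this is not needed for the bare statement.
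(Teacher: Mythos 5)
Your proposal is correct and follows exactly the route the paper intends: the paper gives no separate proof, stating only that the theorem follows immediately from Theorem~\ref{thm:bondprice382}, and your argument (take logarithms, divide by $-(T-t)$, and observe that $U(0,T)=0$ since the stochastic integral over $[0,0]$ vanishes) is precisely that immediate deduction.
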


In Section \ref{sec:6}, we use (\ref{eq:yield589}) to estimate the parameters of the model 
$\mathcal{M}$ from actual yield data.

Recall $\tilde{P}(t,T)$ from (\ref{eq:tildeP514}). 

\begin{prop}\label{prop:PtildeSDE777}
The process $\{\tilde{P}(t,T)\}_{0\le t\le T}$ satisfies 
the SDE
\begin{equation}
d\tilde{P}(t,T)=v(t,T)\tilde{P}(t,T)dW^*(t)\qquad (0\le t\le T),
\label{eq:PtildeSDE479}
\end{equation}
where the deterministic function $v(t,T)$ is defined by
\begin{equation}
v(t,T)
:=\frac{\sigma}{b}\left\{e^{-b(T-t)}-1
+ l(t)m(T-t)\right\} \qquad (0\le t\le T).
\label{eq:Pvola296}
\end{equation}
\end{prop}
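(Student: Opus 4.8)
The plan is to apply It\^o's formula to the closed-form expression for $P(t,T)$ from Theorem \ref{thm:bondprice382} and then divide by $B(t)$, using the martingale property of $\tilde P(\cdot,T)$ under $Q$ to identify the drift. First I would write $P(t,T)=\exp\{G(t,T)\}$ with $G(t,T)=-A(t,T)-C(t,T)r(t)+U(t,T)$, and compute $dG(t,T)$. The term $-C(t,T)r(t)$ contributes both a $dr(t)$ piece and a $\partial_t C\cdot r(t)$ piece; since $dr(t)=\{a-br(t)\}dt+\sigma\,dZ(t)$ and $dZ(t)=dW^*(t)-\big(\int_0^t pe^{-(p+q)(t-\tau)}l(\tau)dW^*(\tau)\big)dt$, the only genuinely stochastic (i.e.\ $dW^*$) contributions to $dG$ come from $-C(t,T)\sigma\,dW^*(t)$ and from the $dW^*$ part of $dU(t,T)$. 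The key observation is that $U(t,T)$ has the form $\kappa(t,T)\int_0^t g(s)\,dZ(s)$ for an explicit deterministic $\kappa$ and $g(s)=e^{qs}-\frac{p}{p+2q}e^{-qs}$, so $dU(t,T)=\{\partial_t\kappa(t,T)\}\big(\int_0^t g\,dZ\big)dt+\kappa(t,T)g(t)\,dZ(t)$, and again only the $\kappa(t,T)g(t)\,dW^*(t)$ term is stochastic.

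The heart of the computation is to check that these $dW^*$ contributions combine to give exactly $v(t,T)$ as in (\ref{eq:Pvola296}). Writing the diffusion coefficient of $dG(t,T)$ against $dW^*(t)$, I expect
\[
\sigma\Big\{-C(t,T)+\frac{1}{b}\cdot\frac{(p+2q)^2e^{qt}m(T-t)}{(p+2q)^2e^{2qt}-p^2}\cdot\Big(e^{qt}-\frac{p}{p+2q}e^{-qt}\Big)\Big\},
\]
and the task is to simplify the second term. Since $bC(t,T)=1-e^{-b(T-t)}$, matching to $v(t,T)=\frac{\sigma}{b}\{e^{-b(T-t)}-1+l(t)m(T-t)\}$ reduces to the algebraic identity
\[
\frac{(p+2q)^2e^{qt}}{(p+2q)^2e^{2qt}-p^2}\Big(e^{qt}-\tfrac{p}{p+2q}e^{-qt}\Big)
=\frac{(p+2q)^2e^{2qt}-p(p+2q)}{(p+2q)^2e^{2qt}-p^2}=l(t),
\]
using $(p+2q)^2e^{2qt}-p(p+2q)=\{(p+2q)^2e^{2qt}-p^2\}-(2qp)$ and the definition $l(t)=1-\frac{2qp}{(p+2q)^2e^{2qt}-p^2}$. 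This confirms that the $dW^*$ coefficient of $G$ equals $v(t,T)$, hence $dP(t,T)=P(t,T)\{(\cdots)dt+v(t,T)dW^*(t)\}$ by It\^o's formula applied to $\exp$.

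Finally, since $d\tilde P(t,T)=d(P(t,T)/B(t))$ and $B$ has finite variation with $dB(t)=r(t)B(t)dt$, the quotient rule gives $d\tilde P(t,T)=\tilde P(t,T)\{(\text{drift})\,dt+v(t,T)\,dW^*(t)\}$ with the same diffusion coefficient. By hypothesis $\tilde P(\cdot,T)$ is a $\{Q,\mathcal F_t\}$-martingale, so its drift must vanish, yielding (\ref{eq:PtildeSDE479}). The main obstacle is purely computational: carrying the $dt$-terms from $dZ$, from $\partial_t A$, $\partial_t C$, $\partial_t\kappa$, and the It\^o second-order term $\tfrac12 v^2$ through the calculation and verifying they cancel — but this cancellation is guaranteed a priori by the martingale property, so in practice only the diffusion coefficient needs to be computed, and the one real subtlety is the algebraic simplification to recognize $l(t)$ above. (Alternatively, one may bypass the drift bookkeeping entirely: compute only the $dW^*$ coefficient of $G(t,T)$, invoke the martingale property of $\tilde P(\cdot,T)$ to conclude the drift is zero, and read off (\ref{eq:Pvola296}).)
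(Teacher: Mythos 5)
Your proposal is correct and follows essentially the same route as the paper: the paper sets $V(t)=-A(t,T)-C(t,T)r(t)+U(t,T)-\int_0^t r(u)\,du$ so that $\tilde P(t,T)=e^{V(t)}$, reads off the $dW^*$ coefficient of $dV$ (which is exactly your $\sigma\{-C+\kappa g\}$ expression, simplifying to $v(t,T)$ via the same $l(t)$ identity), and invokes the martingale property of $\tilde P(\cdot,T)$ to kill the drift. Folding the discount factor into the exponent at the start versus dividing by $B(t)$ afterward is an immaterial difference.
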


\begin{proof}
For fixed $T>0$, we put
\[
V(t):=-A(t,T)-C(t,T)r(t)+U(t,T)-\int_{0}^{t} r(u) du \qquad (0\le t\le T),
\]
so that $\tilde{P}(t,T)=\exp\{V(t)\}$. Then, from (\ref{eq:Vasicek-type-SDE541}) and 
(\ref{eq:def-of-Z524}), we have
\[
\begin{aligned}
&dV(t) = \Phi(t) dt \\
&\quad + \sigma\left\{ -C(t,T) 
+ \frac{(p+2q)^2 e^{qt}m(T-t)}{b((p+2q)^2 e^{2qt} -p^2)} 
\left( e^{qt} -\frac{p}{p+2q} e^{-qt} \right) \right\} dW^*(t) \\
&\quad\ \ \ \ = \beta(t) dt + v(t,T)dW^*(t)
\end{aligned}
\]
for some continuous $\{\mathcal{F}_t\}$-adapted process $\{\beta(t)\}_{0\le t\le T}$. 
This and It\^o's formula yield
\[
d\tilde{P}(t,T)=v(t,T)\tilde{P}(t,T)dW^*(t) + \gamma(t)dt\qquad (0\le t\le T)
\]
for another continuous $\{\mathcal{F}_t\}$-adapted process $\{\gamma(t)\}_{0\le t\le T}$. 
However, since $\{\tilde{P}(t,T)\}_{0\le t\le T}$ is a martingale,  we see that $\gamma(t)\equiv 0$. 
Thus the proposition follows.
\end{proof}

Let $\pi_C(0)$ be the time $0$ price of a European call option on a $T$-bond, with maturity $S\in (0,T)$ 
and exercise price $K\in (0,\infty)$. Then $\pi_C(0)$ is given by
\begin{equation}
\pi_C(0)=E^Q\left[e^{-\int_0^Sr(s)ds}(P(S,T) - K)_+\right],
\label{eq:call-price529}
\end{equation}
where $(x)_+:=\max(x,0)$. We write $\Phi$ for the cumulative distribution function of a standard normal distribution:
\[
\Phi(x):=\int_{-\infty}^x \frac{1}{\sqrt{2\pi}}e^{-(1/2)y^2}dy.
\]

\begin{prop}\label{prop:option274}
For $\pi_C(0)$ in $(\ref{eq:call-price529})$ with $\{r(t)\}$ and $\{P(t,T)\}$ as in Theorem \ref{thm:bondprice382}, 
we have
\[
\pi_C(0)=P(0,T)\Phi(d_+) - KP(0,S)\Phi(d_-),
\]
where
\begin{align*}
&d_+:=\frac{1}{\sqrt{\Sigma^2(S)}}\left\{\log\left(\frac{P(0,T)}{KP(0,S)}\right) + \frac{1}{2}\Sigma^2(S)\right\},\\
&d_-:=d_+ - \sqrt{\Sigma^2(S)},\qquad \Sigma^2(S):=\int_0^S v_{S,T}^2(t)dt,\\
&v_{S,T}(t):=\frac{\sigma}{b}\left\{e^{-b(T-t)}-e^{-b(S-t)}
+ l(t)(m(T-t) - m(S-t))\right\}.
\end{align*}
\end{prop}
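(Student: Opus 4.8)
The plan is to price the call option by the standard change-of-numéraire (forward-measure) technique, using Proposition \ref{prop:PtildeSDE777} as the essential input. First I would introduce the $S$-forward measure $Q^S$ defined by the Radon--Nikod\'ym derivative $dQ^S/dQ = (B(S)P(0,S))^{-1}$ on $\mathcal{F}_S$, which is a genuine probability measure because $\{\tilde P(t,S)\}_{0\le t\le S}$ is a $\{Q,\mathcal{F}_t\}$-martingale by (\ref{eq:tildeP514}). Under $Q^S$ the forward bond price $F(t):=P(t,T)/P(t,S)$ is a martingale, and rewriting (\ref{eq:call-price529}) via Bayes' rule gives
\[
\pi_C(0)=P(0,S)\,E^{Q^S}\!\left[(F(S)-K)_+\right],
\]
since $P(S,T)=F(S)$. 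So the whole computation reduces to showing $F(S)$ is log-normal under $Q^S$ with log-variance $\Sigma^2(S)$, and then invoking the Black--Scholes formula.

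Next I would compute the dynamics of $F(t)$. By Proposition \ref{prop:PtildeSDE777}, both $\tilde P(t,T)$ and $\tilde P(t,S)$ satisfy SDEs of the form $d\tilde P(t,\cdot)=v(t,\cdot)\tilde P(t,\cdot)\,dW^*(t)$ with the deterministic volatilities $v(t,T)$ and $v(t,S)$ from (\ref{eq:Pvola296}). Since $F(t)=\tilde P(t,T)/\tilde P(t,S)$, It\^o's quotient rule yields
\[
dF(t)=F(t)\bigl(v(t,T)-v(t,S)\bigr)\bigl(dW^*(t)-v(t,S)\,dt\bigr),
\]
and by Girsanov's theorem $W^S(t):=W^*(t)-\int_0^t v(s,S)\,ds$ is a $Q^S$-Brownian motion on $[0,S]$ (the Novikov condition is trivial here because $v(\cdot,S)$ is bounded and deterministic). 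From (\ref{eq:Pvola296}) one reads off $v(t,T)-v(t,S)=v_{S,T}(t)$, exactly the function in the statement; hence $dF(t)=F(t)\,v_{S,T}(t)\,dW^S(t)$, so
\[
F(S)=F(0)\exp\!\left\{\int_0^S v_{S,T}(t)\,dW^S(t)-\tfrac12\int_0^S v_{S,T}^2(t)\,dt\right\},
\]
with $F(0)=P(0,T)/P(0,S)$. Because $v_{S,T}$ is deterministic, $\int_0^S v_{S,T}(t)\,dW^S(t)$ is centered Gaussian under $Q^S$ with variance $\Sigma^2(S)=\int_0^S v_{S,T}^2(t)\,dt$, so $F(S)$ is log-normal.

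Finally I would plug this into the standard lemma that if $X$ is log-normal under a measure with $E[X]=F(0)$ and $\mathrm{Var}(\log X)=\Sigma^2$, then $E[(X-K)_+]=F(0)\Phi(d_+)-K\Phi(d_-)$ with $d_\pm$ as in the statement; multiplying by $P(0,S)$ and using $P(0,S)F(0)=P(0,T)$ gives precisely $\pi_C(0)=P(0,T)\Phi(d_+)-KP(0,S)\Phi(d_-)$. The only genuinely model-specific step is the algebraic identification $v(t,T)-v(t,S)=v_{S,T}(t)$ and the verification that $\Sigma^2(S)$ as defined equals $\int_0^S v_{S,T}^2$; everything else is the textbook forward-measure argument. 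I expect the main (though modest) obstacle to be bookkeeping care with the memory term $l(t)m(T-t)$ in (\ref{eq:Pvola296}) when taking the difference, plus being explicit that all the stochastic-calculus manipulations (quotient rule, Girsanov) are legitimate on the finite horizon $[0,S]$ with bounded deterministic integrands; there is no integrability subtlety because the $Z$-driven randomness has been fully absorbed into the Brownian motion $W^*$ at the level of Proposition \ref{prop:PtildeSDE777}.
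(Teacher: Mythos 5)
Your proof is correct and follows essentially the same route as the paper: the authors likewise reduce everything to the identity $v_{S,T}(t)=v(t,T)-v(t,S)$ together with the determinism of $v(\cdot,T)$, and then simply cite the standard forward-measure option-pricing result (Proposition 7.2 of Filipovi\'c) for the Black--Scholes-type computation that you write out in full. Your explicit derivation (the $S$-forward measure, the quotient-rule SDE for $P(t,T)/P(t,S)$, Girsanov, log-normality of $F(S)$) is exactly the content of that cited proposition, so there is no substantive difference.
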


\begin{proof}
Notice that $v_{S,T}(t)=v(t,T) - v(t,S)$. 
Since $v(t,T)$ is a deterministic function, 
the proposition follows immediately from Proposition \ref{prop:PtildeSDE777} and \cite[Proposition 7.2]{F}.
\end{proof}

We use Proposition \ref{prop:option274} to illustrate the efficiency of a numerical method in 
Section \ref{sec:5}.

\begin{rem}
The Vasicek model $(\ref{eq:VasicekSDE326})$ is one of the affine term-structure models which are 
extensively used in finance. The CIR model is another example. See \cite{DK1, DK2, F}. See also 
\cite{BSS, FS, ALS} for related SDEs and diffusion processes. It is an open problem to obtain 
results similar to those in this paper for the CIR-type and other models that are obtained by replacing 
the Brownian motion by the process $\{Z(t)\}$.
\end{rem}

%%%%%%%%%%%%%%%%%%%%%  Section 4 %%%%%%%%%%%%%%%%%%%

\section{Proof of Theorem \ref{thm:bondprice382}}
\label{sec:4}

From (\ref{eq: rep-of-r762}),
\[
\begin{aligned}
\int_{t}^{T}r(\tau)d\tau
&=\sigma\int_{t}^{T}\left\{ \int_{t}^{\tau}e^{-b(\tau-s)}dZ(s)\right\}d\tau\\
&\qquad\qquad\qquad   + C(t,T)r(t) + \frac{a}{b}\left\{T-t-C(t,T)\right\}.
\end{aligned}
\]
By the classical Fubini theorem as well as that for stochastic integrals 
(cf.\ \cite[Section 6.5]{F}), the first term on the right-hand side is equal to
\[
\sigma\int_{t}^{T}\frac{1 - e^{-b(T-s)}}{b}dZ(s)
=-\sigma \int_t^{\infty}f(s)dZ(s),
\]
where
\[
f(s):=\frac{\{e^{-b(T-s)}-1\}}{b}I_{[t,T]}(s).
\]
Combining these with 
(\ref{eq:rep-P354}), we obtain
\begin{equation}
\begin{aligned}
P(t,T)&= \exp \left\{ - \frac{a}{b} \left( T-t-C(t,T)\right) - C(t,T)r(t) \right\}\\
&\qquad\qquad\qquad\quad 
\times E^Q\left.\left[\exp\left\{\sigma\int_{t}^{\infty} f(s)dZ(s)\right\}\right|\mathcal{F}_t\right].
\end{aligned}
\label{eq:1st-step528}
\end{equation}
We write $P_{[0,t]}$ for the orthogonal projection from $L^2(\Omega,\mathcal{F},Q)$ onto 
the closed subspace spanned by $\{Z(\tau): \tau\in [0,t]\}$, and $P_{[0,t]}^\bot$ for 
its orthogonal complement: 
$P^\bot_{[0,t]}X:=X-P_{[0,t]}X$ for $X\in L^2(\Omega,\mathcal{F},Q)$. 
From \cite[Theorem 5.2 and Example 5.3]{AIK} and \cite[Section 2]{INA}, we have, for $t\ge 0$,
\[
W^*(t)=Z(t) + \int_0^t\left\{\int_0^s
p(p+2q)\frac{(p+2q)e^{q\tau} - pe^{-q\tau}}{(p+2q)e^{q\tau} - p^2e^{-q\tau}}
dZ(\tau)
\right\}ds.
\]
This and (\ref{eq:def-of-Z524}) imply
\begin{equation}
\sigma(Z(s): s\in [0,t]) = \sigma(W^*(s): s\in [0,t]) \qquad (t\ge 0).
\label{eq: innovation418}
\end{equation}
Since the process $\{Z(s)\}$ is Gaussian, we see from (\ref{eq: innovation418}) that $\mathcal{F}_t$ 
and $P_{[0,t]}^\bot\int_{t}^{\infty}f(s)dZ(s)$ are independent.
Hence,
\begin{equation}
\begin{aligned}
&E^Q\left.\left[\exp\left\{\sigma\int_{t}^{\infty} f(s)dZ(s)\right\}\right|\mathcal{F}_t\right]
 = \exp\left\{\sigma P_{[0,t]}\int_{t}^{\infty}f(s)dZ(s)\right\}
\\
&\qquad\qquad\qquad\qquad\qquad\quad \times 
\exp\left\{\frac{\sigma^2}{2}\left\Vert P_{[0,t]}^\bot\int_{t}^{\infty}f(s)dZ(s)\right\Vert^2\right\},
\end{aligned}
\label{eq:2nd-step528}
\end{equation}
where $\Vert X\Vert:=E^Q[X^2]^{1/2}$ for $X \in L^2(\Omega, \mathcal{F}, Q)$.

First, we calculate $P_{[0,t]}\int_{t}^{\infty}f(s)dZ(s)$. 
From \cite[Theorem 4.7 and Example 5.3]{AIK}, 
\[
P_{[0,t]}\int_{t}^{\infty}f(s)dZ(s)
=\int_{0}^{t}\left\{\int_{0}^{\infty}g(s,\tau;0,t)f(t+\tau)d\tau \right\}dZ(s),
\]
where
\begin{align*}
&\phi(t):=-\frac{pe^{-qt}}{p+2q}, \label{eq: def-of-phi246}\\
&b(s,\tau):=-pe^{-qs}e^{-(p+q)\tau},\\
&g(s,\tau ; 0,t):=b(t-s,\tau)\frac{1}{1-\phi^2(t)}+b(s,\tau)\frac{\phi(t)}{1-\phi^2(t)}.
\end{align*}
However, since $I_{[t,T]}(t+\tau)=I_{[0,T-t]}(\tau)$, we have
\[
\begin{aligned}
&\int_{0}^{\infty}g(s,\tau;0,t)f(t+\tau)d\tau \\
&\quad =\frac{p\left\{e^{-q(t-s)}+\phi(t)e^{-qs}\right\}}{b\{1-\phi^2(t)\}}
\int_{0}^{T-t}e^{-(p+q)\tau}\left\{e^{-b(T-t-\tau)}-1\right\}d\tau\\
&\quad = - \frac{e^{-qt}m(T-t)}{b\{1-\phi^2(t)\}}  \left( e^{qs} -\frac{p}{p+2q}e^{-qs} \right).
\end{aligned}
\]
Thus
\begin{equation}
P_{[0,t]}\int_{t}^{\infty}f(s)dZ(s)= -\frac{1}{\sigma}U(t,T).
\label{eq:3rd-step831}
\end{equation}

Next, we calculate $\Vert P_{[0,t]}^\bot\int_{t}^{\infty}f(s)dZ(s)\Vert^2$. 
By \cite[Theorem 4.12 and Example 5.3]{AIK},
\[
\left\Vert P_{[0,t]}^\bot\int_{t}^{\infty}f(s)dZ(s)\right\Vert^2
=\sum^\infty_{n=0}\int_{0}^{\infty}d_n^2(s,f)ds,
\]
where
\[
d_0 (s,f):=-\int_{0}^{\infty}c(u)f(t+s+u)du+f(t+s)\qquad(s>0)
\]
and
\begin{equation}
\begin{aligned}
&d_n (s,f):=-\int_{0}^{\infty}c(u)\left\{\int_{0}^{\infty}b_n (t+u+s,\tau)f(t+\tau)d\tau\right\}du \\
&\qquad\qquad\qquad +\int_{0}^{\infty}b_n (t+s,\tau)f(t+\tau)d\tau\qquad(n\in\mathbb{N},\ s>0)
\end{aligned}
\label{eq:Dn852}
\end{equation}
with
\[
\begin{aligned}
&c(s):=pe^{-(p+q)s}I_{(0,\infty)}(s),\\
&b_n(s,\tau):=-\phi^{n-1}(t)pe^{-qs}e^{-(p+q)\tau}\qquad (n\in\mathbb{N},\ s,\tau>0).
\end{aligned}
\]
Since $I_{[t,T]}(t+s+u)=I_{[0,T-t-s]}(u)I_{[0,T-t]}(s)$, we have
\[
\begin{aligned}
&\int_{0}^{\infty}c(u)f(t+s+u)du \\
&\quad =\frac{p}{b}I_{[0,T-t]}(s)\int_{0}^{T-t-s}e^{-(p+q)u}\left\{e^{-b(T-t-s-u)}-1\right\}du \\
&\quad = - \frac{1}{b}I_{[0,T-t]}(s)m(T-t-s),
\end{aligned}
\] 
so that
\[
d_0(s,f)=I_{[0,T-t]}(s)\left\{\frac{1}{b} m(T-t-s)+f(t+s)\right\}.
\]
Thus
\begin{equation}
\begin{aligned}
\int_{0}^{\infty}d_0^2(s,f)ds
&=\int_{0}^{T-t}\left\{\frac{1}{b}m(T-t-s) + f(t+s)\right\}^2ds\\
&=\frac{1}{b^2}\int_{0}^{T-t}\left\{m(s) + e^{-bs} - 1\right\}^2ds.
\end{aligned}
\label{eq:intd0123}
\end{equation}
For $n\in\mathbb{N}$ and $\tau\in [0,T-t]$, we have
\[
\int_{0}^{\infty}b_n (t+u+s,\tau)f(t+\tau)d\tau
= \frac{1}{b}\phi^{n-1}(t)e^{-q(t+s+u)}m(T-t).
\]
So, the first term on the right-hand side of (\ref{eq:Dn852}) is equal to
\[
- \frac{pe^{-q(t+s)}}{b(p+2q)}m(T-t)\phi^{n-1}(t),
\]
while the second term to $-(1/b)e^{-q(t+s)}m(T-t)\phi^{n-1}(t)$, whence
\[
d_n(s,f)
= \frac{2qe^{-q(t+s)}}{b(p+2q)}m(T-t)\phi^{n-1}(t).
\]
Therefore,
\[
\int_{0}^{\infty} d_n^2(s,f)ds=\frac{2qe^{-2qt}}{b^2(p+2q)^2}m^2(T-t)\phi^{2(n-1)}(t).
\]
From this and $|\phi(t)|<1$, we get
\begin{equation}
\begin{aligned}
\sum^\infty_{n=1}\int_{0}^{\infty}d_n^2(s,f)ds
&=\frac{2qe^{-2qt}m^2(T-t)}{b^2(p+2q)^2\{1 - \phi^{2}(t)\}} \\
&=\frac{2 q m^2(T-t)}{ b^2\{(p+2q)^2 e^{2qt} -p^2\}}.
\end{aligned}
\label{eq:4th-step628}
\end{equation}
Theorem \ref{thm:bondprice382} follows from 
(\ref{eq:1st-step528}), 
(\ref{eq:2nd-step528}), 
(\ref{eq:3rd-step831}), 
(\ref{eq:intd0123}), 
and (\ref{eq:4th-step628}).

\begin{rem}
If we use other functions as $\l$ in $(\ref{eq:def-of-Z524})$, then 
the above proof of Theorem \ref{thm:bondprice382} does not work. 
For, Theorems 4.7 and 4.12 in \cite{AIK} are no more available. 
The authors do not know other functions which give results similar to those 
in Section \ref{sec:3}.
\end{rem}

%%%%%%%%%%%%%%%%  Section 5 %%%%%%%%%%%%%%%%%%%%%%%%%%%

\section{An associated Markov process}\label{sec:5}

We define
\[
u(t)
:= \int_{0}^{t} e^{(p+q)s} l(s) dW^*(s) \qquad (t\ge 0)
\]
so that
\[
Z(t)=W^*(t) - \int_0^tpe^{-(p+q)s}u(s)ds\qquad (t\ge 0),
\label{eq:Z-u328}
\]
and couple the short rate process $\{r(t)\}$ with $\{u(t)\}$. 
Then, the resulting two-dimensional process $\{(r(t), u(t))^{\mathrm{T}}\}$ satisfies the SDE
\begin{equation}
\begin{aligned}
d\left( 
\begin{array}{cc}
r(t) \\
u(t)
\end{array}\right) &=
\left\{
\left(
\begin{array}{cc}
a\\
0
\end{array}
\right)
-
\left(
\begin{matrix}
b & \sigma pe^{-(p+q)t} \\
0 & 0
\end{matrix}
\right)
\left(
\begin{array}{cc}
r(t) \\
u(t)
\end{array}
\right)
\right\} dt 
\\
&\qquad\qquad +
\left( \begin{array}{cc}
\sigma \\
e^{(p+q)t} l(t)
\end{array}\right) dW^*(t)\qquad (t\ge 0),
\end{aligned}
\label{eq:SDE275}
\end{equation}
whence it is a Markov process. This implies that 
$P(t,T)$, which is defined by (\ref{eq:rep-P354}), would have an expression of 
the form $P(t,T)=F(t,r(t),u(t); T)$. The aim of this section is to derive an explicit 
expression for such $F$ and to present a numerical method to evaluate contingent claims based on it.

\begin{lem}\label{lem:Zint384}
We have
\[
\int_{0}^{t} \left( e^{qs} -\frac{p}{p+2q} e^{-qs} \right) dZ(s) 
= \frac{e^{-pt}}{l(t)} \left( 1-\frac{p e^{-2qt}}{p+2q}  \right)u(t)\qquad (t\ge 0).
\]
\end{lem}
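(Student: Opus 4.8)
The plan is to recognise the right-hand side as $k(t)u(t)$ for an explicit deterministic $C^1$ function $k$, and then to obtain the identity by integration by parts. Throughout write $h(s):=e^{qs}-\frac{p}{p+2q}e^{-qs}$ for the integrand on the left, and $k(t):=\frac{e^{-pt}}{l(t)}\left(1-\frac{p e^{-2qt}}{p+2q}\right)$ for the coefficient on the right, so that the claim reads $\int_0^t h(s)\,dZ(s)=k(t)u(t)$. From the definition of $u$ in this section we have $du(s)=e^{(p+q)s}l(s)\,dW^*(s)$, and from $Z(t)=W^*(t)-\int_0^t p e^{-(p+q)s}u(s)\,ds$ we have $dZ(s)=dW^*(s)-p e^{-(p+q)s}u(s)\,ds$.

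The key algebraic step is to put $k$ in closed form. Clearing denominators in the definition of $l$ and using $p^2+2pq=p(p+2q)$ gives $l(\tau)=\frac{(p+2q)\left[(p+2q)e^{2q\tau}-p\right]}{(p+2q)^2e^{2q\tau}-p^2}$, while $1-\frac{p e^{-2q\tau}}{p+2q}=\frac{(p+2q)e^{2q\tau}-p}{(p+2q)e^{2q\tau}}$. Multiplying the second expression by the reciprocal of the first, the factor $(p+2q)e^{2q\tau}-p$ cancels and one is left with
\[
\frac{1}{l(\tau)}\left(1-\frac{p e^{-2q\tau}}{p+2q}\right)=1-\frac{p^2 e^{-2q\tau}}{(p+2q)^2},
\qquad\text{so}\qquad
k(\tau)=e^{-p\tau}-\frac{p^2}{(p+2q)^2}\,e^{-(p+2q)\tau}.
\]
From this closed form two elementary identities follow: $k(s)\,e^{(p+q)s}l(s)=e^{-ps}\left(1-\frac{p e^{-2qs}}{p+2q}\right)e^{(p+q)s}=e^{qs}-\frac{p}{p+2q}e^{-qs}=h(s)$ (which in fact needs only the definition of $k$), and, differentiating the closed form, $k'(s)=-p e^{-ps}+\frac{p^2}{p+2q}e^{-(p+2q)s}=-p e^{-(p+q)s}h(s)$.

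Finally I would apply integration by parts to the product of the deterministic, $C^1$ function $k$ and the continuous martingale $u$. Since $k$ is of bounded variation there is no cross-variation term, and since $u(0)=0$ the boundary term vanishes, so
\[
k(t)u(t)=\int_0^t k(s)\,du(s)+\int_0^t k'(s)\,u(s)\,ds.
\]
Substituting $du(s)=e^{(p+q)s}l(s)\,dW^*(s)$ and the two identities from the previous paragraph gives
\[
k(t)u(t)=\int_0^t h(s)\,dW^*(s)-\int_0^t p e^{-(p+q)s}h(s)\,u(s)\,ds
=\int_0^t h(s)\left(dW^*(s)-p e^{-(p+q)s}u(s)\,ds\right)=\int_0^t h(s)\,dZ(s),
\]
which is the asserted identity. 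The only genuine obstacle is the algebraic simplification of $k$ in the second paragraph; once that cancellation is in hand the rest is bookkeeping, and in particular no It\^o correction term enters because $k$ is deterministic.
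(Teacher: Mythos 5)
Your proof is correct and is essentially the paper's own argument run in reverse: both rest on stochastic integration by parts between the deterministic factor and $u(t)$ (so no It\^o correction), together with a verification that the residual drift terms cancel. Your closed form $k(\tau)=e^{-p\tau}-\frac{p^2}{(p+2q)^2}\,e^{-(p+2q)\tau}$ makes that cancellation --- which the paper carries out as the identity $R(s)\equiv 0$ after integrating $\int_0^t e^{qs}\,dZ(s)$ and $\int_0^t e^{-qs}\,dZ(s)$ separately --- immediate, which is a tidier way to organize the same computation.
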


\begin{proof}
Since $u(0)=0$ and $e^{qt}dZ(t)=\{e^{-pt}/l(t)\}du(t) - pe^{-qt}u(t)dt$, 
\[
\int_{0}^{t}  e^{qs} dZ(s) 
=\frac{e^{-pt}}{l(t)} u(t) -\int_{0}^{t} \left\{ \frac{e^{-ps}}{l(s)}  \right\}' u(s) ds 
-  \int_{0}^{t}  pe^{-ps}  u(s)ds.
\]
Similarly,
\[
\begin{aligned}
\int_{0}^{t} e^{-qs} dZ(s) 
&= \frac{e^{-(p+2q)t}}{l(t)} u(t) -\int_{0}^{t} \left\{ \frac{e^{-(p+2q)s}}{l(s)}  \right\}' u(s) ds \\
&\qquad\qquad\qquad\qquad\quad -  \int_{0}^{t}  pe^{-(p+2q)s}  u(s)ds.
\end{aligned}
\]
Combining,
\[
\int_{0}^{t} \left( e^{qs} -\frac{p}{p+2q} e^{-qs} \right) dZ(s) 
= \frac{e^{-pt}}{l(t)} \left( 1-\frac{p e^{-2qt}}{p+2q}  \right)u(t) + \int_{0}^{t} R(s) u(s) ds,
\]
where
\[
R(s):= \frac{p}{p+2q} \left\{ \left( \frac{ e^{-(p+2q)s} }{l(s)} \right)' 
 + pe^{-(p+2q)s}  \right\} - \left\{ \frac{e^{-ps}}{l(s)} \right\}' - pe^{-ps}.
\]  
However, from
\[
R(s)=\frac{pe^{-ps} - pe^{-(p+2q)s}}{l(s)} 
+  \left\{  \frac{p e^{-(p+2q)s}}{p+2q} -e^{-ps} \right\} 
\left\{\left(\frac{1}{l(s)} \right)' + p\right\}
\]
and
\[
\left\{  \frac{p e^{-(p+2q)s}}{p+2q} -e^{-ps}\right\} \left\{ \frac{1}{l(s)}  \right\}'
=2q e^{-ps} \left\{ \frac{1}{l(s)} - 1   \right\},
\]
we see that $R(s)\equiv 0$. Thus the lemma follows.
\end{proof}

Here is the explicit expression for $F$ stated above. It may be regarded as 
an {\em affine term-structure}\/ for the model (cf.\ \cite{DK1, DK2} and \cite[Section 5.3]{F}).

\begin{thm}\label{thm:54}
Let $P(t,T)$ be as in Theorem \ref{thm:bondprice382}. 
Then we have
\[
P(t,T)=F(t,r(t),u(t);T)\qquad (0\leq t\leq T),
\]
where
\begin{equation}
\begin{aligned}
F(t,x,y;T)&:=\exp\left\{-A(t,T)-C(t,T)x + D(t,T)y\right\} \\
&\qquad\qquad\qquad\qquad 
((t,x,y)\in [0,T]\times \mathbb{R}\times \mathbb{R})
\end{aligned}
\label{eq:AFT628}
\end{equation}
with deterministic function $D(t,T)$ defined by
\[
D(t,T):=\frac{\sigma }{b} e^{-(p+q)t} m(T-t). \\
\]
\end{thm}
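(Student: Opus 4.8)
The plan is to show that the random quantity $U(t,T)$ appearing in Theorem \ref{thm:bondprice382} can be rewritten as $D(t,T)u(t)$ for the stated deterministic function $D(t,T)$; once this is done the claimed formula $P(t,T)=F(t,r(t),u(t);T)$ follows immediately by substituting into the expression $P(t,T)=\exp\{-A(t,T)-C(t,T)r(t)+U(t,T)\}$.

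Recall that
\[
U(t,T)=\frac{\sigma (p+2q)^2 e^{qt}m(T-t)}{b\{(p+2q)^2 e^{2qt} -p^2\}}\int_{0}^{t}\left(e^{qs}-\frac{p}{p+2q}e^{-qs}\right)dZ(s).
\]
The key input is Lemma \ref{lem:Zint384}, which identifies the stochastic integral $\int_0^t (e^{qs}-\frac{p}{p+2q}e^{-qs})\,dZ(s)$ with $\frac{e^{-pt}}{l(t)}(1-\frac{pe^{-2qt}}{p+2q})u(t)$. So the first step is to substitute this into $U(t,T)$, giving
\[
U(t,T)=\frac{\sigma (p+2q)^2 e^{qt}m(T-t)}{b\{(p+2q)^2 e^{2qt}-p^2\}}\cdot\frac{e^{-pt}}{l(t)}\left(1-\frac{pe^{-2qt}}{p+2q}\right)u(t).
\]
The second step is the routine but essential algebraic simplification showing that the scalar coefficient of $u(t)$ here equals $D(t,T)=\frac{\sigma}{b}e^{-(p+q)t}m(T-t)$. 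For this I would expand $1-\frac{pe^{-2qt}}{p+2q}=\frac{(p+2q)-pe^{-2qt}}{p+2q}$, multiply through by the factor $(p+2q)^2 e^{qt}$ in the numerator, and use the explicit form $l(t)=1-\frac{2qp}{(p+2q)^2e^{2qt}-p^2}=\frac{(p+2q)^2e^{2qt}-p^2-2qp}{(p+2q)^2e^{2qt}-p^2}$, noting $(p+2q)^2e^{2qt}-p^2-2qp = (p+2q)e^{2qt}\cdot(p+2q) - p(p+2q) $ up to regrouping; in fact $(p+2q)^2e^{2qt}-p^2 = \{(p+2q)e^{qt}-pe^{-qt}\}\{(p+2q)e^{qt}+pe^{-qt}\}e^{0}$ after pulling out $e^{qt}$, and similarly $(p+2q)-pe^{-2qt}$ relates to $(p+2q)e^{qt}-pe^{-qt}$ times $e^{-qt}$. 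Tracking the powers of $e^{qt}$ and the common factors $(p+2q)e^{qt}-pe^{-qt}$ between numerator and the expansion of $l(t)$, everything cancels except $\frac{\sigma}{b}e^{-(p+q)t}m(T-t)$.

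The main obstacle is purely bookkeeping: making sure the several factors — the prefactor $(p+2q)^2e^{qt}$, the denominator $(p+2q)^2e^{2qt}-p^2$, the factor $e^{-pt}/l(t)$ from Lemma \ref{lem:Zint384}, and the factor $1-\frac{pe^{-2qt}}{p+2q}$ — combine correctly, in particular that the net power of $e^{qt}$ works out to $e^{-(p+q)t}$ and that all the $p$-dependent polynomial factors cancel. The cleanest route is to clear all denominators, write $l(t)$ over its common denominator $(p+2q)^2e^{2qt}-p^2$, and observe that this same denominator already appears in $U(t,T)$, so it cancels against $1/l(t)$ up to the numerator $(p+2q)^2e^{2qt}-p^2-2qp$; one then checks that $\{(p+2q)^2e^{qt}\}\cdot e^{-pt}\cdot\frac{(p+2q)-pe^{-2qt}}{p+2q}$ divided by $\{(p+2q)^2e^{2qt}-p^2-2qp\}$ equals $e^{-(p+q)t}$, which reduces to the identity $(p+2q)e^{qt}\{(p+2q)-pe^{-2qt}\} = e^{qt}e^{-qt}\{(p+2q)^2e^{2qt}-p^2-2qp\}\cdot e^{qt}$ — i.e. to verifying $(p+2q)^2e^{2qt}-p(p+2q) = (p+2q)^2e^{2qt}-p^2-2qp$, which is trivially true since $p(p+2q)=p^2+2qp$. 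No deeper ideas are needed.
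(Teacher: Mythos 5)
Your proposal is correct and follows exactly the paper's own proof: apply Lemma \ref{lem:Zint384} to rewrite $U(t,T)$ as $D(t,T)u(t)$ and substitute into Theorem \ref{thm:bondprice382}. The algebraic simplification you carry out (which the paper leaves implicit) checks out, the key cancellation being $(p+2q)-pe^{-2qt}=e^{-2qt}\{(p+2q)e^{2qt}-p\}$ together with $(p+2q)^2e^{2qt}-p^2-2qp=(p+2q)\{(p+2q)e^{2qt}-p\}$.
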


\begin{proof}
By Lemma \ref{lem:Zint384}, we have
\[
U(t,T)=\frac{\sigma }{b} e^{-(p+q)t} m(T-t) u(t)=D(t,T) u(t).
\]
This and Theorem \ref{thm:bondprice382} yield the theorem.
\end{proof}

We turn to analogs of term-structure equations for 
European contingent claims in the model $\mathcal{M}$.

\begin{prop}\label{prop:PDE537}
Let $S\in (0,\infty)$ and 
let $g:\mathbb{R}\times\mathbb{R} \to \mathbb{R}$ and 
$G:[0,S]\times \mathbb{R}\times\mathbb{R}\to\mathbb{R}$ 
be continuous. 
We assume that $G$ 
is in $C^{1,2,2}([0,S)\times \mathbb{R}\times\mathbb{R})$ and satisfies
\begin{equation}
\left\{
\begin{aligned}
&\frac{\partial G}{\partial t}(t,x,y) + \mathcal{L}G(t,x,y) = 0\qquad ((t,x,y)\in [0,S)\times 
\mathbb{R}\times \mathbb{R}),\\
& G(S,x,y)=g(x,y) \qquad ((x,y)\in \mathbb{R}\times \mathbb{R}),
\end{aligned}
\right.
\label{eq:PDE263}
\end{equation}
where
\[
\begin{aligned}
\mathcal{L}G
&:=\frac{\sigma^{2}}{2} \frac{\partial^2 G}{\partial x^2} 
+ \sigma e^{(p+q)t} l(t) \frac{\partial^2 G}{\partial x\partial y} 
+ \frac{\{e^{(p+q)t} l(t)\}^2}{2} \frac{\partial^2 G}{\partial y^2} \\
&\qquad\qquad\qquad\qquad\qquad\qquad 
+ \left\{a-bx- p \sigma e^{-(p+q)t} y\right\} \frac{\partial G}{\partial x} -x G.
\end{aligned}
\]
We also assume
\begin{equation}
\int_{0}^{S} E^{Q}\left[e^{-2 \int_{0}^{t} r(s) ds}
\left\{\Delta G(t,r(t),u(t))\right\}^2\right] dt< \infty,
\label{eq:L2condtion624}
\end{equation}
where
\[
\Delta G:=\sigma \frac{\partial G}{\partial x} + e^{(p+q)t} l(t) \frac{\partial G}{\partial y}.
\]
Then
\begin{equation}
E^{Q}\left[\left. e^{-\int_{t}^{S} r(s) ds} g(r(S),u(S))\right|\mathcal{F}_{t}\right]
=G(t,r(t),u(t)) \quad 
(0\le t\le S).
\label{eq:price274}
\end{equation}
\end{prop}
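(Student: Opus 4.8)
The plan is to apply It\^o's formula to the process $M(t):=e^{-\int_0^t r(s)ds}G(t,r(t),u(t))$ on $[0,S)$, using the two-dimensional Markov SDE (\ref{eq:SDE275}) for $(r(t),u(t))^{\mathrm T}$, and to show that the PDE (\ref{eq:PDE263}) forces the drift term to vanish, so that $M$ is a local martingale; the $L^2$-condition (\ref{eq:L2condtion624}) then upgrades it to a genuine martingale, and (\ref{eq:price274}) follows by taking conditional expectations and invoking the terminal condition $G(S,\cdot,\cdot)=g$.

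First I would fix $S\in(0,\infty)$, take $t\in[0,S)$, and apply the It\^o formula to $M(t)=e^{-\int_0^t r(s)ds}G(t,r(t),u(t))$. Writing the $dt$-part contributed by the finite-variation factor $e^{-\int_0^t r(s)ds}$ as $-r(t)M(t)\,dt$, and writing out the It\^o expansion of $G(t,r(t),u(t))$ using the drift and diffusion coefficients read off from (\ref{eq:SDE275}) — namely drift $a-br(t)-\sigma p e^{-(p+q)t}u(t)$ and diffusion $\sigma$ in the $r$-component, drift $0$ and diffusion $e^{(p+q)t}l(t)$ in the $u$-component, with cross-variation $\sigma e^{(p+q)t}l(t)\,dt$ — one finds that the total $dt$-coefficient of $M$ is exactly $e^{-\int_0^t r(s)ds}\bigl(\partial_t G+\mathcal L G\bigr)(t,r(t),u(t))$, which is $0$ by the first line of (\ref{eq:PDE263}). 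The martingale part is $e^{-\int_0^t r(s)ds}\,\Delta G(t,r(t),u(t))\,dW^*(t)$, where $\Delta G=\sigma\,\partial_x G+e^{(p+q)t}l(t)\,\partial_y G$ is precisely the quantity appearing in (\ref{eq:L2condtion624}). Hence $dM(t)=e^{-\int_0^t r(s)ds}\Delta G(t,r(t),u(t))\,dW^*(t)$ on $[0,S)$.

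Next I would observe that condition (\ref{eq:L2condtion624}) says exactly that the integrand of this stochastic integral is in $L^2([0,S]\times\Omega)$, so $\{M(t)\}_{0\le t\le S}$ is a true $\{Q,\mathcal F_t\}$-martingale on $[0,S)$; continuity of $G$ and $g$ on the closed sets and the terminal condition $G(S,x,y)=g(x,y)$ let me extend $M$ continuously to $t=S$ with $M(S)=e^{-\int_0^S r(s)ds}g(r(S),u(S))$, and the martingale property persists up to $S$ (the $L^2$-bound over $[0,S]$ gives uniform integrability). The martingale property then yields, for $0\le t\le S$,
\[
E^Q\bigl[M(S)\mid\mathcal F_t\bigr]=M(t),
\]
i.e. $E^Q\bigl[e^{-\int_0^S r(s)ds}g(r(S),u(S))\mid\mathcal F_t\bigr]=e^{-\int_0^t r(s)ds}G(t,r(t),u(t))$. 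Multiplying both sides by $e^{\int_0^t r(s)ds}$, which is $\mathcal F_t$-measurable, gives precisely (\ref{eq:price274}).

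The main obstacle, and the only genuinely non-formal point, is the passage from local martingale to martingale up to and including the terminal time $S$: one must confirm that (\ref{eq:L2condtion624}) is the right integrability hypothesis to control $\int_0^S e^{-\int_0^t r(s)ds}\Delta G(t,r(t),u(t))\,dW^*(t)$ as an $L^2$-bounded martingale — this is where the factor $e^{-2\int_0^t r(s)ds}$ and the square in (\ref{eq:L2condtion624}) enter — and that the continuity assumptions on $G$ near $t=S$ suffice to identify the terminal value as $g(r(S),u(S))$ without any boundary blow-up. The bulk of the argument, by contrast, is the routine It\^o computation matching the $dt$-coefficient to $\partial_t G+\mathcal L G$, which I would present compactly rather than in full detail.
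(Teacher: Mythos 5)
Your proposal is correct and follows essentially the same route as the paper: apply It\^o's formula to $e^{-\int_0^t r(s)ds}G(t,r(t),u(t))$ using the Markov SDE (\ref{eq:SDE275}), observe that the PDE (\ref{eq:PDE263}) kills the drift, invoke (\ref{eq:L2condtion624}) to conclude the process is a square-integrable martingale, and finish by conditioning and the terminal condition. The only cosmetic difference is that the paper first writes $dG(t,r(t),u(t))=r(t)G\,dt+\Delta G\,dW^*(t)$ and then forms the discounted product, whereas you expand the product directly; the computations are identical.
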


\begin{proof}
From (\ref{eq:SDE275}), (\ref{eq:PDE263}) and It\^o's formula, we have 
\[
dG(t,r(t),u(t))
=r(t)G(t,r(t),u(t))dt + \Delta G(t,r(t), u(t)))dW^*(t),
\]
whence
\[
\begin{aligned}
d\left\{ e^{-\int_{0}^{t} r(s) ds} G(t,r(t),u(t)) \right\}
=e^{-\int_{0}^{t} r(s) ds} \Delta G(t,r(t),u(t))dW^*(t).
\end{aligned} 
\]
From this and (\ref{eq:L2condtion624}), we see that 
$\{e^{-\int_{0}^{t} r(s) ds} G(t,r(t),u(t))\}_{0\leq t\leq S}$ is a square integrable martingale, 
whence
\[
\begin{aligned}
e^{-\int_{0}^{t} r(s) ds}G(t,r(t),u(t))
&=E^{Q}\left[\left. e^{-\int_{0}^{S} r(s) ds} G(S,r(S),u(S))\right|\mathcal{F}_{t}\right]\\
&=E^{Q}\left[\left. e^{-\int_{0}^{S} r(s) ds} g(r(S),u(S))\right|\mathcal{F}_{t}\right].
\end{aligned}
\]
Thus (\ref{eq:price274}) follows.
\end{proof}

Suppose that, for $0<S\le T$ and a continuous function $h: (0,\infty)\to \mathbb{R}$, 
we want to evaluate a European contingent claim 
$h(P(S,T))$ with maturity $S$, written on a $T$-bond. 
Since
\[
h(P(S,T)) = h(F(S,r(S),u(S);T))
\]
for $F$ in (\ref{eq:AFT628}), 
the price of the claim at time $t\in [0,S]$ is given by the left-hand side of (\ref{eq:price274}) 
with $g(x,y)=h(F(S,x,y;T))$. Therefore, by Proposition \ref{prop:PDE537} and under the assumption that 
(\ref{eq:PDE263}) has a suitable solution, the evaluation is reduced to solving 
(\ref{eq:PDE263}) numerically. In this sense, we may regard (\ref{eq:PDE263}) as a 
{\em term-structure equation\/} for the claim $h(P(S,T))$ in the model $\mathcal{M}$ 
(cf.\ \cite[Section 5.2]{F}).

We illustrate the efficiency of the numerical method stated above 
with two examples.

\begin{ex}
{\rm 
We consider a $T$-bond. Here we take the values
\[
T=1,\quad a=0.12,\quad b=1.9,\quad \sigma=0.35,\quad p=0.034,\quad q=0.12.
\]
In Figure 1, 
we compare the values of $G(0,r(0),0)$ obtained solving (\ref{eq:PDE263}) 
with $g(x,y)\equiv 1$ numerically and the exact values given by $P(0,1)=F(0,r(0),0;1)$ with 
$F$ in (\ref{eq:AFT628}). 
}
\end{ex}

\begin{ex}
{\rm 
We consider a bond call option $(P(S,T)-K)_+$ with maturity $S$ and 
exercise price $K$. We take the values
\begin{align*}
&T=1,\quad K=0.3,\quad a=0.08,\quad b=1.5,\quad \sigma=0.3,\\
&p=0.07,\quad q=0.08,
\quad r(0)=0.025\ \ (\mbox{or $r(0)=2.5$ \%}).
\end{align*}
In Figure 2, 
we compare the values of $G(0,r(0),0)$ obtained solving (\ref{eq:PDE263}) 
with $g(x,y)=(F(S,x,y;T) - K)_+$ and $F$ in (\ref{eq:AFT628}) numerically and 
the exact values given by Proposition \ref{prop:option274}. 
}
\end{ex}

\begin{figure}
\begin{center}
\includegraphics[width=12cm]{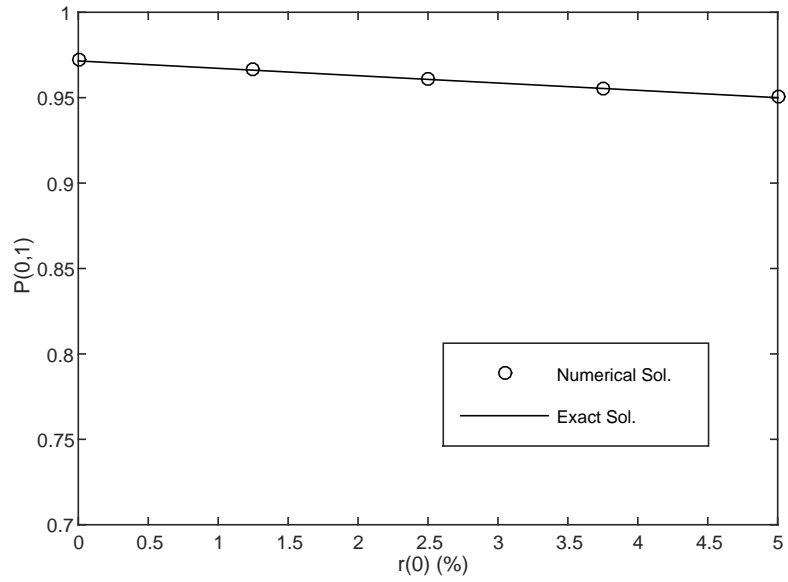}
\caption{Zero-Coupon Bond Prices}
\end{center}
\end{figure}

\begin{figure}
\begin{center}
\includegraphics[width=12cm]{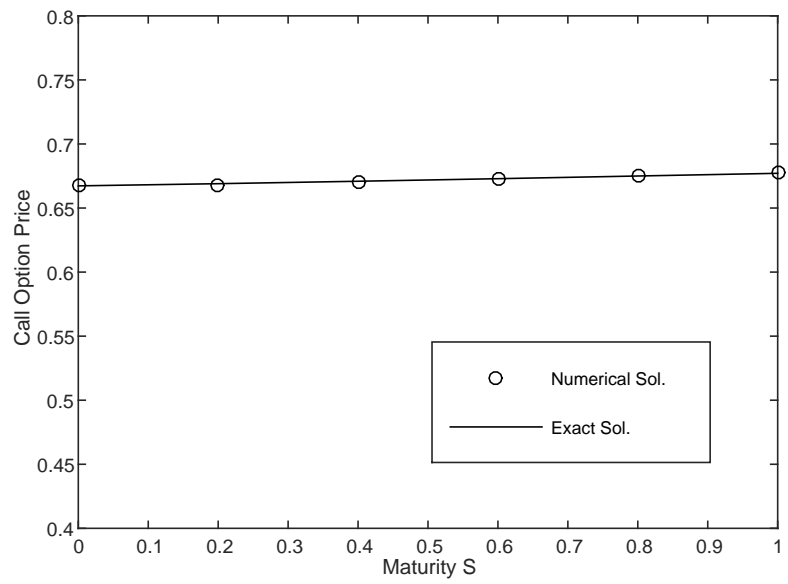}
\caption{Call Option Prices}
\end{center}
\end{figure}

%%%%%%%%%%%%%%%%%%%% Section 6 %%%%%%%%%%%%%%%%%%%%%%%%%%%%%%%

\section{Estimation of parameters}\label{sec:6}
We estimate the parameters of $\mathcal{M}$ by fitting the yield curve 
to actual yield data. More precisely, we search for the values of parameters such that the 
following least squares error is minimized:
\[
\sum_{i=1}^{n} \left\{y(0,T_i)-Y(0,T_i)\right\}^2
\]
(see \cite[Chapter 3]{F} and \cite[Section 30.8]{H} for related discussions on calibrating term-structure models). 
Here, $Y(0,T)$ is the yield for a $T$-bond in the model $\mathcal{M}$ given by (\ref{eq:yield589}), 
and $y(0,T_i)$'s are actual yield data. We take $n=10$ and
\begin{align*}
&T_{1}=\frac{1}{12},\quad T_{2}=\frac{3}{12},\quad T_{3}=\frac{6}{12},\quad T_{4}=1,\quad T_{5}=2,\\
&T_{6}=3,\quad T_{7}=5,\quad T_{8}=7,\quad T_{9}=10,\quad T_{10}=20\quad \mbox{(in years)}.
\end{align*}
As $y(0,T_i)$'s, we use the Daily Treasury Yield Curve Rates published at the US Treasury Website. 

First, we use the data on December 31, 2007. The estimated values of the parameters are as follows:
\begin{align*}
&a=0.1635,\quad b=1.8952,\quad \sigma=0.7247,\quad p=0.0909,\quad q=0.2100,\\
&r(0)=0.0240\ \ (\mbox{or $r(0)=2.40$ \%}).
\end{align*}
In Figure 3, we show the data $y(0,T_i)$ and the fitted yield curve $Y(0,T)$. 
For comparison, we also show the Vasicek yield curve fitted to the data. 

Next, we use the data on May 24, 2005. The estimated values of the parameters are as follows:
\begin{align*}
&a=0.0822,\quad b=1.5561,\quad \sigma=0.3007,\quad p=0.0696,\quad q=0.0758,\\
&r(0)=0.0259\ \ (\mbox{or $r(0)=2.59$ \%}).
\end{align*}
The data and fitted yield curves are shown in Figure 4.

Finally, we use the data on September 15, 2008. The estimated values of the parameters are as follows:
\begin{align*}
&a=0.1216,\quad b=1.6806,\quad \sigma=0.6246,\quad p=0.1170,\quad q=0.1623,\\
&r(0)=1.0010 \times 10^{-5}\ \ (\mbox{or $r(0)=1.0010 \times 10^{-3}$ \%}).
\end{align*}
The data and fitted yield curves are shown in Figure 5.

\begin{figure}
\begin{center}
\includegraphics[width=12cm]{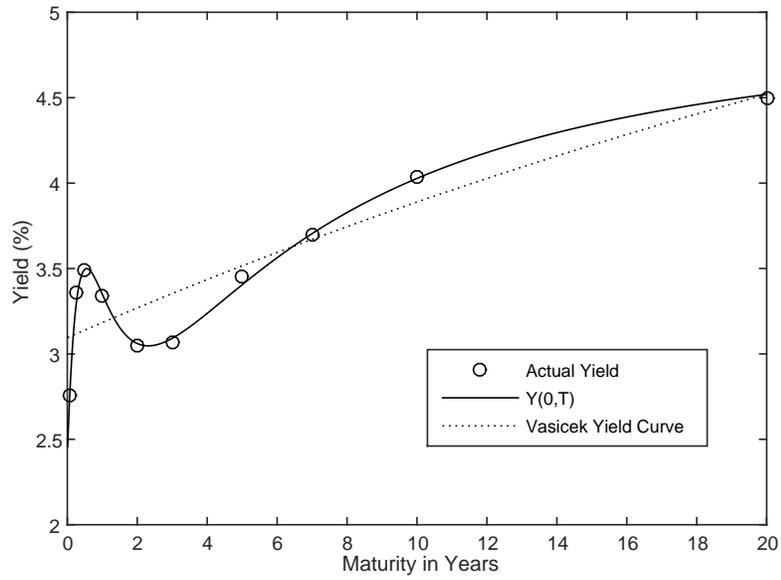}
\caption{Yield Curve on December 31, 2007}
\end{center}
\end{figure}

\begin{figure}
\begin{center}
\includegraphics[width=12cm]{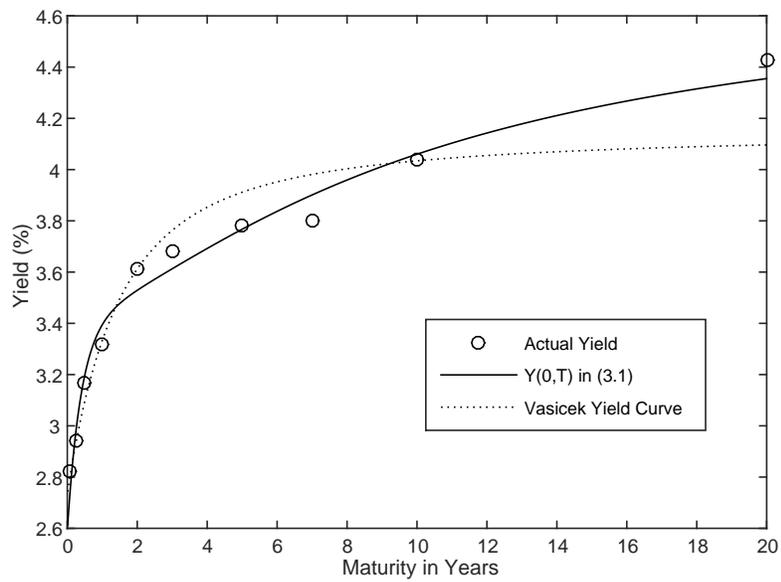}
\caption{Yield Curve on May 24, 2005}
\end{center}
\end{figure}

\begin{figure}
\begin{center}
\includegraphics[width=12cm]{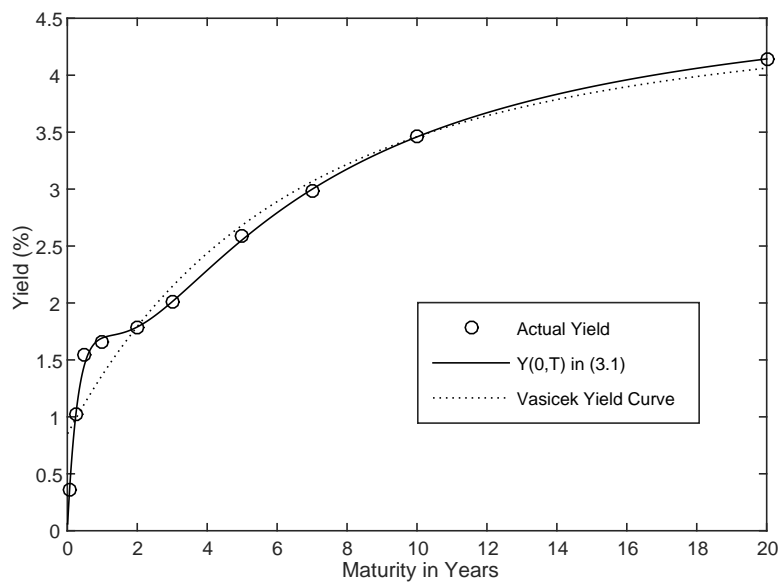}
\caption{Yield Curve on September 15, 2008}
\end{center}
\end{figure}

%%%%%%%%%%%%        References       %%%%%%%%%%%%

\end{document}